\theoremstyle{plain}
\newtheorem{theorem}{Theorem}
\newtheorem{lemma}[theorem]{Lemma}
\newtheorem{corollary}[theorem]{Corollary}
\theoremstyle{definition}
\newtheorem{definition}[theorem]{Definition}
\newtheorem{remark}[theorem]{Remark}
\DeclareMathOperator*{\argmin}{arg\,min}
\DeclareMathOperator{\range}{Ran}
\DeclareMathOperator{\domain}{dom}
\newcommand{\field}[1]{\mathbb{#1}}
\newcommand{\R}{\field{R}}
\newcommand{\N}{\field{N}}
\DeclareMathOperator{\D}{\mathcal{D}}
\begin{document}

\title{Source Conditions for non-quadratic Tikhonov Regularisation}
\author{Markus Grasmair}
\address{Department of Mathematical Sciences \\ Norwegian University of Science and Technology \\ N-7491 Trondheim, Norway}
\email{\href{mailto:markus.grasmair@ntnu.no}{markus.grasmair@ntnu.no}}
\date{\today}
\subjclass[2010]{47A52; 49N45; 65J20}
\keywords{Linear inverse problems; Tikhonov regularisation; convergence rates; source conditions}

\begin{abstract}
In this paper we consider convex Tikhonov regularisation for the solution
of linear operator equations on Hilbert spaces.
We show that standard fractional source conditions can be employed in
order to derive convergence rates in terms of the Bregman distance,
assuming some stronger convexity properties of either the
regularisation term or its convex conjugate.
In the special case of quadratic regularisation, we are able to reproduce
the whole range of H\"older type convergence rates known from classical theory.
\end{abstract}

\maketitle

\section{Introduction}

In the recent years, considerable progress has been made concerning the
analysis of convex Tikhonov regularisation in various settings.
Existence, stability, and convergence have been treated exhaustively
in different settings including that of non-linear problems in Banach spaces
with different similarity and regularisation terms.
Moreover, starting with the paper~\cite{BurOsh04}, the questions
of reconstruction accuracy and asymptotic error estimates have
gradually been answered.

The setting of~\cite{BurOsh04}, which we will also pursue in this paper,
is that of the stable solution of a linear, but noisy and ill-posed, operator
equation
\[
Fu = v^\delta
\]
by means of Tikhonov regularisation
\begin{equation}\label{eq:func}
u_\alpha^\delta = \argmin_u \Bigl(\frac{1}{2}\lVert Fu - v^\delta\rVert^2 + \alpha\mathcal{R}(u)\Bigr),
\end{equation}
with a quadratic similarity term but the convex and lower semi-continuous regularisation
term $\mathcal{R}$.
It was shown in~\cite{BurOsh04} that the source condition
\[
\xi^\dagger = F^*\omega^\dagger \in \partial\mathcal{R}(u^\dagger),
\]
with $u^\dagger$ being the solution of the noise-free equation, implies the
error estimate
\[
\D_{\xi^\dagger}(u_\alpha^\delta,u^\dagger) \lesssim \delta
\]
for a parameter choice $\alpha \sim \delta$.
Here $\D_{\xi^\dagger}$ denotes the Bregman distance for the functional $\mathcal{R}$,
which is defined as
\[
\D_{\xi^\dagger}(u_\alpha^\delta,u^\dagger) 
= \mathcal{R}(u_\alpha^\delta) - \mathcal{R}(u^\dagger) - \langle \xi^\dagger, u_\alpha^\delta - u^\dagger\rangle.
\]
This result can be seen as a direct generalisation of the classical result
for quadratic regularisation with $\mathcal{R}(u) = \frac{1}{2}\lVert u \rVert^2$,
where we have the convergence rate
\[
\lVert u_\alpha^\delta - u^\dagger \rVert \lesssim \delta^{1/2}
\qquad\text{ if }\qquad
u^\dagger = F^*\omega^\dagger,
\]
again for the parameter choice $\alpha \sim \delta$.
This is due to the fact that the sub-differential of the 
regularisation term consists in this case of the single element $u^\dagger$,
and the Bregman distance is simply the squared norm of the difference of the arguments.
The classical results, however, are in fact significantly more general,
as they can be easily extended to fractional source conditions
leading to rates of the form
\[
\lVert u_\alpha^\delta - u^\dagger\rVert \lesssim \delta^{\frac{2\nu}{2\nu+1}}
\]
if the source condition
\begin{equation}\label{eq:classical}
u^\dagger = (F^*F)^\nu \omega^\dagger
\end{equation}
holds for some $0 < \nu \le 1$ and the regularisation parameter $\alpha$ is chosen appropriately.

In order to generalise these results to non-linear operators $F$,
the paper~\cite{Poe} introduced the idea of variational inequalities,
which were later modified in~\cite{BotHof10,Gra10}
in order to deal with lower regularity of the solution as well.
As alternative, the idea of approximate source conditions was introduced
first for quadratic regularisation \cite{Hof06} and then generalised to
non-quadratic situations~\cite{Hei08}.
In their original form, both of these approaches dealt,
in the non-quadratic case,
only with lower order convergence rates;
in the quadratic setting, this would roughly correspond to
the classical source condition~\eqref{eq:classical} with $\nu \le 1/2$.
However, modifications were proposed for approximate source
conditions in~\cite{Hei09,Neu} and for variational inequalities in~\cite{Gra13} in order to
accommodate for a higher regularity as well,
roughly corresponding to~\eqref{eq:classical} with $1/2 < \nu \le 1$.

In contrast to the relatively simple source condition~\eqref{eq:classical},
variational inequalities and approximate source conditions can be hard to interpret and
verify in concrete settings.
Thus it would be desirable to obtain restatements
in terms of more palpable conditions and to clarify the relation
between the different variational and approximate conditions and 
standard source conditions.
For the quadratic case, this relation has been made clear in~\cite{Elb}.
For the non-quadratic case, however, such an analysis is, as of now,
not available.

\subsection{Summary of results}

In this article, we will consider convex Tikhonov regularisation
for linear inverse problems on Hilbert spaces of the form~\eqref{eq:func}.
The goal of this article is the derivation of convergence rates,
that is, estimates for the difference between the reconstruction
$u_\alpha^\delta$ and the true solution $u^\dagger$
under the natural generalisation
\begin{equation}\label{eq:source}
\xi^\dagger = (F^*F)^\nu \omega^\dagger \in \partial\mathcal{R}(u^\dagger)
\end{equation}
of the classical source condition~\eqref{eq:func} to convex regularisation
terms.
The following theorem briefly summarises the main results obtained in this paper,
see Theorems~\ref{th:rates1}, \ref{th:pconv}, and~\ref{th:higher}.
For an overview of the notation used here, see Section~\ref{se:prel}.

\begin{theorem}
Assume that a source condition of the form~\eqref{eq:source}
holds for some $0 < \nu \le 1$. Then we have the following convergence rates:
\begin{itemize}
\item For $0 < \nu \le 1/2$ we have
\[
\D_{\xi^\dagger}(u_\alpha^\delta,u^\dagger) \lesssim \delta^{2\nu}
\qquad\text{ for }\qquad
\alpha \sim \delta^{2-2\nu}.
\]
\item If $\mathcal{R}$ is $p$-convex (see Definition~\ref{de:pconvex}) and
$0 < \nu \le 1/2$ we have
\[
\D_{\xi^\dagger}(u_\alpha^\delta,u^\dagger) \lesssim \delta^{\frac{2\nu p}{p-1+2\nu}}
\qquad\text{ for }\qquad
\alpha \sim \delta^{\frac{2p-2-2p\nu+4\nu}{p-1+\nu}}.
\]
\item If $\mathcal{R}$ is $q$-coconvex (see Definition~\ref{de:qcoconvex}) and
$1/2 \le \nu \le 1$ we have
\[
\D_{\xi_\alpha^\delta,\xi^\dagger}^{{\rm sym}}(u_\alpha^\delta,u^\dagger)
\lesssim \delta^{\frac{2\nu q}{1+2\nu q - 2\nu}}
\qquad\text{ for }\qquad
\alpha \sim \delta^{\frac{2+2\nu q - 4\nu}{1+2\nu q - 2\nu}}.
\]
\end{itemize}
\end{theorem}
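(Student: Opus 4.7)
The three bullets share a common starting point: from the definition of $u_\alpha^\delta$ together with $\lVert F u^\dagger - v^\delta\rVert \le \delta$ one obtains
\[
\tfrac{1}{2}\lVert F u_\alpha^\delta - v^\delta\rVert^2 + \alpha\,\D_{\xi^\dagger}(u_\alpha^\delta,u^\dagger) \le \tfrac{\delta^2}{2} + \alpha \langle \xi^\dagger,\, u_\alpha^\delta - u^\dagger\rangle,
\]
and substituting the source condition turns the pairing on the right into $\langle \omega^\dagger,(F^*F)^\nu(u_\alpha^\delta-u^\dagger)\rangle$. Throughout I write $z=u_\alpha^\delta-u^\dagger$ and $r=F u_\alpha^\delta-v^\delta$. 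For $0<\nu\le 1/2$ I would control the pairing by the spectral/moment inequality $\lVert (F^*F)^\nu z\rVert \le \lVert Fz\rVert^{2\nu}\lVert z\rVert^{1-2\nu}$, obtained by applying Hölder's inequality to the spectral measure of $F^*F$ against $z$; for $\nu\ge 1/2$ the same idea is used in a dual direction, playing $\xi_\alpha^\delta - \xi^\dagger$ off against $u_\alpha^\delta - u^\dagger$.

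For the first bullet, coercivity of $\mathcal{R}$ together with the a priori bound $\mathcal{R}(u_\alpha^\delta)\le \mathcal{R}(u^\dagger)+\delta^2/(2\alpha)$ yields $\lVert z\rVert\le C$ for small $\delta$. Combining $\lVert Fz\rVert\le \lVert r\rVert+\delta$ with Young's inequality with exponents $1/\nu$ and $1/(1-\nu)$, I would absorb the contribution $\alpha\lVert r\rVert^{2\nu}$ into $\tfrac{1}{2}\lVert r\rVert^2$ at the cost of a term of order $\alpha^{1/(1-\nu)}$. Division by $\alpha$ leaves three error contributions of orders $\delta^2/\alpha$, $\delta^{2\nu}$ and $\alpha^{\nu/(1-\nu)}$, all of which are balanced at $\alpha\sim\delta^{2-2\nu}$, producing the claimed rate $\D_{\xi^\dagger}\lesssim\delta^{2\nu}$.

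The second bullet refines this scheme: $p$-convexity supplies a bound $\lVert z\rVert^p\lesssim \D_{\xi^\dagger}(u_\alpha^\delta,u^\dagger)$, so the crude factor $\lVert z\rVert^{1-2\nu}$ can be replaced by $\D_{\xi^\dagger}^{(1-2\nu)/p}$ on the right-hand side. One then applies Young's inequality in three variables in order to isolate $\tfrac{1}{2}\lVert r\rVert^2$ on one side and a fractional power of $\D_{\xi^\dagger}$ on the other, arriving at an implicit inequality of the form $\D_{\xi^\dagger}\le c_1\delta^2/\alpha + c_2\alpha^{\sigma_1}\delta^{\sigma_2}\D_{\xi^\dagger}^{\sigma_3}$ with $\sigma_3<1$; solving for $\D_{\xi^\dagger}$ and then choosing $\alpha$ to make the two resulting contributions equal gives the stated exponent and parameter choice.

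The third bullet is the genuinely delicate case. The optimality condition for (\ref{eq:func}) delivers $\xi_\alpha^\delta:=-\alpha^{-1}F^* r\in\partial\mathcal{R}(u_\alpha^\delta)$, so the symmetric Bregman distance equals $\langle \xi_\alpha^\delta-\xi^\dagger,z\rangle$. I would rewrite $\xi^\dagger-\xi_\alpha^\delta=(F^*F)^\nu\omega^\dagger+\alpha^{-1}F^* r$, test against $z$, and then invoke $q$-coconvexity, which is nothing but strong convexity of $\mathcal{R}^*$ and provides a lower bound on $\D^{\mathrm{sym}}_{\xi_\alpha^\delta,\xi^\dagger}(u_\alpha^\delta,u^\dagger)$ in terms of a power of $\lVert \xi_\alpha^\delta-\xi^\dagger\rVert$. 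A dual interpolation inequality adapted to the regime $\nu\ge 1/2$, followed by Young's inequality, then closes the system. I expect this third case to be the main obstacle: one must reconcile the source-side regularity $\nu\ge 1/2$ with the dual regularity encoded by $q$-coconvexity and balance three coupled quantities ($\lVert r\rVert$, $\D^{\mathrm{sym}}$, and $\alpha$) simultaneously, which is precisely what forces the intricate exponents $\tfrac{2\nu q}{1+2\nu q-2\nu}$ and $\tfrac{2+2\nu q-4\nu}{1+2\nu q-2\nu}$ to appear.
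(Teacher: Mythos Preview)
Your treatment of the first two bullets is essentially the paper's own argument: start from the minimiser inequality, apply the interpolation estimate $\lVert (F^*F)^\nu z\rVert\le\lVert Fz\rVert^{2\nu}\lVert z\rVert^{1-2\nu}$, bound $\lVert z\rVert$ either by a constant (first bullet, via coercivity) or by $\D_{\xi^\dagger}^{1/p}$ (second bullet, via $p$-convexity), and close with Young's inequality. One small sharpening: in the second bullet the paper chooses the exponents in the three-term Young inequality so that the $\D_{\xi^\dagger}$-factor emerges with exponent exactly $1$ and a coefficient strictly smaller than $1$; this lets you absorb it linearly on the left rather than solving an implicit inequality with $\sigma_3<1$. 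Your version still works after one more application of Young, but the paper's choice is cleaner and avoids the spurious $\delta^{\sigma_2}$ factor.

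The third bullet is where your sketch has a genuine gap. Writing $\xi^\dagger-\xi_\alpha^\delta=(F^*F)^\nu\omega^\dagger+\alpha^{-1}F^*r$ and testing against $z$ produces a term $\langle\omega^\dagger,(F^*F)^\nu z\rangle$ with $\nu>1/2$, for which the interpolation inequality you quote is no longer available (the exponent $1-2\nu$ would be negative). There is no obvious way to connect this directly to $\lVert\xi_\alpha^\delta-\xi^\dagger\rVert$, so $q$-coconvexity does not bite. The paper's key move is instead to pass entirely to the data space $V$: since $\nu\ge 1/2$, one may write $\xi^\dagger=F^*\omega^\dagger$ with $\omega^\dagger=(FF^*)^{\nu-1/2}\tilde\eta^\dagger\in V$, and from the KKT condition $\xi_\alpha^\delta=F^*\omega_\alpha^\delta$ with $\omega_\alpha^\delta=-\alpha^{-1}r$. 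The symmetric Bregman distance then becomes $\langle\omega_\alpha^\delta-\omega^\dagger,Fz\rangle$, which, after splitting $Fz=r+(v^\delta-v^\dagger)$ and using $r=-\alpha\omega_\alpha^\delta$, yields a crucial \emph{negative} term $-\alpha\lVert\omega_\alpha^\delta-\omega^\dagger\rVert^2$ plus $-\alpha\langle\omega_\alpha^\delta-\omega^\dagger,\omega^\dagger\rangle$ and a noise term. The standard interpolation inequality is now applied in $V$ (with $F^*$ in the role of $F$ and exponent $\mu=\nu-\tfrac12\in(0,\tfrac12]$) to the middle term, producing exactly $\lVert F^*(\omega_\alpha^\delta-\omega^\dagger)\rVert^{2\mu}=\lVert\xi_\alpha^\delta-\xi^\dagger\rVert^{2\mu}$, which is what $q$-coconvexity controls. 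The three quantities to be balanced are thus $\lVert\omega_\alpha^\delta-\omega^\dagger\rVert$, $\D^{\rm sym}$ and $\alpha$, not $\lVert r\rVert$. Without this passage to the dual variable $\omega$ and the resulting negative quadratic, the argument does not close.
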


In the case of quadratic regularisation with $\mathcal{R}(u) = \frac{1}{2}\lVert u \rVert^2$,
all of these results coincide with the classical results found, for instance, in~\cite{Gro84}.
In Section~\ref{se:examples}, we will in addition discuss the implications for
several examples of non-quadratic regularisation terms.

\section{Mathematical preliminaries}\label{se:prel}

Let $U$ and $V$ be Hilbert spaces and $F \colon U \to V$ a bounded linear operator.
Moreover, let $\mathcal{R}\colon U \to [0,+\infty]$ be a convex, lower semi-continuous
and coercive functional. Given some data $v \in V$, we consider the stable, approximate
solution of the equation $Fu = v$ by means of non-quadratic Tikhonov regularisation,
that is, by minimising the functional
\[
\mathcal{T}_\alpha(u,v) := \frac{1}{2}\lVert Fu-v\rVert^2 + \alpha\mathcal{R}(u).
\]

More precisely, we assume that $v^\dagger\in V$ is some "true" data,
but that we are only given noisy data $v^\delta \in V$ satisfying
\[
\lVert v^\dagger - v^\delta\rVert \le \delta
\]
for some noise level $\delta > 0$.
Moreover, we denote the true, that is, $\mathcal{R}$-minimising, solution of
the noise-free equation $Fu=v^\dagger$ by
\[
u^\dagger :\in \argmin_u\bigl\{\mathcal{R}(u) : Fu = v^\dagger\bigr\}.
\]
Our main goal is the estimation of the worst case reconstruction error 
\[
\sup\bigl\{D(u_\alpha^\delta,u^\dagger) : u_\alpha^\delta \in \argmin_u\mathcal{T}_\alpha(u,v^\delta),\ \lVert v^\dagger-v^\delta\rVert \le \delta\bigr\}.
\]
Here, $D \colon U \times U \to [0,+\infty]$ is some distance like measure.
In the following results we will mostly use the Bregman distance with respect to the
regularisation functional $\mathcal{R}$, which is defined as
\[
\D_\xi(\tilde{u},u):= \mathcal{R}(\tilde{u}) - \mathcal{R}(u) - \langle \xi, \tilde{u}-u\rangle,
\]
where 
\[
\xi \in \partial\mathcal{R}(u)
\]
is some sub-gradient of $\mathcal{R}$ at $u$.
In addition, we will consider the symmetric Bregman distance
\[
\D^{{\rm sym}}_{\xi,\tilde{\xi}} := \D_\xi(\tilde{u},u) + \D_{\tilde{\xi}}(u,\tilde{u})
= \langle \xi - \tilde{\xi}, u-\tilde{u}\rangle
\]
for
\[
\xi \in \partial\mathcal{R}(u)
\qquad\text{ and }\qquad
\tilde{\xi} \in \partial\mathcal{R}(\tilde{u}),
\]
as well as the norm in some instances.

\subsection{Existence, convergence, and stability}

It is well known that Tikhonov regularisation
with a convex, lower semi-continuous, and coercive regularisation term
is a well-defined regularisation method.
That is, the following results hold (see~\cite[Thms.~3.22,~3.23,~3.26]{VMII}):

\begin{itemize}
\item For every $v \in V$ and every $\alpha > 0$, the functional $\mathcal{T}_\alpha(\cdot,v)$
attains its minimum.
\item Assume that $v_k \to v \in V$ and $\alpha_k \to \alpha > 0$,
and let $u_k \in \argmin_u \mathcal{T}_{\alpha_k}(u,v_k)$. Then the sequence
$u_k$ has a weakly convergent sub-sequence. Moreover, if $\bar{u}$ is the
weak limit of any weakly convergent sub-sequence $(u_{k'})$, then
\[
\bar{u} \in \argmin_u \mathcal{T}_\alpha(u,v)
\qquad\text{ and }\qquad
\mathcal{R}(u_{k'}) \to \mathcal{R}(\bar{u}).
\]
\item Assume that 
\begin{equation}\label{eq:ratio}
\delta_k \to 0,\qquad \alpha_k \to 0,\qquad\text{ and }\qquad\delta_k^2/\alpha_k \to 0.
\end{equation}
Let moreover $v_k \in V$ satisfy $\lVert v_k - v^\dagger\rVert \le \delta_k$, and let
$u_k \in \argmin_u\mathcal{T}_{\alpha_k}(u,v_k)$.
Then the sequence $u_k$ has a sub-sequence $(u_{k'})$ that converges weakly
to some $\mathcal{R}$-minimising solution $\bar{u}$ of the equation $Fu = v^\dagger$
and $\mathcal{R}(u_{k'}) \to \mathcal{R}(\bar{u})$.
\end{itemize}

\begin{remark}
If the functional $\mathcal{T}_\alpha(\cdot,v)$ is strictly convex,
which is the case, if and only if the restriction of $\mathcal{R}$ to the kernel of $F$
is strictly convex, then the minimiser of $\mathcal{T}_\alpha(\cdot,v)$ as well
as the $\mathcal{R}$-minimising solution of $Fu=v^\dagger$ are unique.
In such a case, a standard sub-sequence argument shows that the whole
sequences $u_k$ converge weakly to $\bar{u}$.
\end{remark}

\begin{remark}\label{re:bd}
The fact that $u_\alpha^\delta$ minimises the Tikhonov functional $\mathcal{T}_\alpha(\cdot,v^\delta)$
implies that
\begin{equation}\label{eq:bdh1}
\frac{1}{2}\lVert Fu_\alpha^\delta - v^\delta\rVert^2 + \alpha\mathcal{R}(u_\alpha^\delta)
\le \frac{1}{2}\lVert Fu^\dagger - v^\delta\rVert^2 + \alpha\mathcal{R}(u^\dagger)
\le \frac{\delta^2}{2} + \alpha\mathcal{R}(u^\dagger),
\end{equation}
which in turn implies in particular that
\[
\mathcal{R}(u_\alpha^\delta) \le \frac{\delta^2}{2\alpha} + \mathcal{R}(u^\dagger).
\]
Because of the coercivity of $\mathcal{R}$, it follows that there exists some
constant $R = R(\delta^2/\alpha,u^\dagger)$ only depending on the ratio $\delta^2/\alpha$
and the true solution $u^\dagger$ (or, rather, the function value $\mathcal{R}(u^\dagger)$
at the true solution) such that
\begin{equation}\label{eq:solbound}
\lVert u_\alpha^\delta \rVert \le R(\delta^2/\alpha,u^\dagger).
\end{equation}
We will in the following always be interested in the case where $u^\dagger$
is a fixed $\mathcal{R}$-minimising solution of $Fu = v^\dagger$
and the noise level $\delta$ is small and thus, due to the requirement~\eqref{eq:ratio}
on the regularisation parameter,
also the ratio $\delta^2/\alpha$.
Therefore, we can always assume that all the regularised solutions $u_\alpha^\delta$
are uniformly bounded.
\end{remark}

\begin{remark}\label{re:coercive}
Throughout this paper, we assume that the regularisation term $\mathcal{R}$
is coercive, as this guarantees the well-posedness of the regularisation method
as well as the bound~\eqref{eq:solbound}, which is needed for the derivation
of the convergence rates later on.
However, both of these can also be guaranteed under the weaker condition
that the Tikhonov functional $\mathcal{T}_\alpha(\cdot,v)$ is coercive
for any or, equivalently, every $\alpha > 0$ and $v \in V$.
For the well-posedness see again~\cite[Thms.~3.22,~3.23,~3.26]{VMII};
the bound follows from the inequality (cf.~\eqref{eq:bdh1})
\[
\frac{1}{2}\lVert Fu_\alpha^\delta - v\rVert^2
\le \lVert Fu_\alpha^\delta - v^\delta \rVert^2
+ \lVert v-v^\delta\rVert^2
\le \delta^2 + 2\alpha\mathcal{R}(u^\dagger)+\lVert v - v^\delta\rVert^2
\]
and the fact that $v^\delta \to v^\dagger$ implying that $\lVert v - v^\delta \rVert$
remains bounded for every fixed $v \in V$.
Thus all the results of this paper remain valid under this more general coercivity condition.

In particular, this holds for regularisation with (higher order)
homogeneous Sobolev norms or (higher order) total variation
\[
\mathcal{R}(u) = \lVert \nabla^\ell (u) \rVert_{L^p}^p
\qquad\text{ or }\qquad
\mathcal{R}(u) = \lvert D^\ell (u) \rvert(\Omega)
\]
with $\ell \in \N$ and $1 < p < +\infty$ provided that the domain
$\Omega$ is connected and the kernel of $F$ does not contain any
polynomials of degree at most $\ell-1$. See for instance
\cite{AcaVog94,Ves01} for the total variation case,
\cite[Prop.~3.66, 3.70]{VMII} for quadratic Sobolev and total variation regularisation,
and~\cite{Gra11} for the general, abstract case.
\end{remark}

\subsection{An interpolation inequality}

All of the convergence rate results in this paper are based at some
point on the following interpolation inequality,
which can, for instance be found in~\cite[p.~47]{EHN}:

\begin{lemma}
For all $0\le\nu\le 1/2$ and all $u \in U$ we have
\begin{equation}\label{eq:interpol_orig}
\lVert (F^*F)^\nu u\rVert
\le \lVert Fu\rVert^{2\nu}\lVert u\rVert^{1-2\nu}.
\end{equation}
\end{lemma}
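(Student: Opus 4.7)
The plan is to reduce everything to a scalar Hölder inequality via the spectral theorem applied to the bounded, non-negative, self-adjoint operator $A := F^*F$ on $U$. Let $\{E_\lambda\}_{\lambda \ge 0}$ denote its spectral resolution, supported on $[0,\lVert A\rVert]$, and write $d\mu_u(\lambda) := d\langle E_\lambda u, u\rangle$ for the associated scalar measure on $[0,+\infty)$. This measure encodes all three norms appearing in the inequality: indeed,
\[
\lVert u\rVert^2 = \int d\mu_u,\qquad
\lVert Fu\rVert^2 = \langle Au,u\rangle = \int \lambda\, d\mu_u,\qquad
\lVert (F^*F)^\nu u\rVert^2 = \int \lambda^{2\nu}\, d\mu_u.
\]

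The key step is then to apply Hölder's inequality to the integrand $\lambda^{2\nu} = \lambda^{2\nu}\cdot 1$ with the conjugate exponents $p = 1/(2\nu)$ and $q = 1/(1-2\nu)$, which lie in $(1,+\infty)$ for $0 < \nu < 1/2$. This yields
\[
\int \lambda^{2\nu}\, d\mu_u
\le \Bigl(\int \lambda\, d\mu_u\Bigr)^{2\nu}\Bigl(\int d\mu_u\Bigr)^{1-2\nu}
= \lVert Fu\rVert^{4\nu}\,\lVert u\rVert^{2(1-2\nu)},
\]
and taking square roots gives the desired estimate.

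Finally, I would dispatch the two boundary cases separately: for $\nu = 0$ the inequality reduces to $\lVert u\rVert \le \lVert u\rVert$, and for $\nu = 1/2$ it reduces to the identity $\lVert (F^*F)^{1/2}u\rVert = \lVert Fu\rVert$, which follows again from the spectral calculus (or directly from $\lVert (F^*F)^{1/2}u\rVert^2 = \langle F^*Fu,u\rangle = \lVert Fu\rVert^2$). There is no serious obstacle here: the whole content is a one-line application of Hölder once the spectral representation is in place; the only small care needed is to ensure the exponents are admissible, which is exactly why the statement is restricted to $\nu \in [0,1/2]$.
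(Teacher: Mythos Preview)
Your argument is correct and is the standard proof of this Heinz--type interpolation inequality via the spectral theorem and H\"older's inequality. The paper itself does not give a proof of this lemma at all but merely cites \cite[p.~47]{EHN}, so there is no alternative approach to compare against; your write-up simply fills in what the paper leaves to the reference.
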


More precisely, we will make use of the following result:

\begin{corollary}
Let $0 \le \nu \le 1/2$ and assume that $\xi \in U$ satisfies
\[
\xi = (F^*F)^\nu \omega
\]
for some $\omega \in U$.
Then
\begin{equation}\label{eq:interpol}
\langle \xi,u\rangle \le \lVert \omega \rVert \lVert Fu\rVert^{2\nu} \lVert u \rVert^{1-2\nu}
\end{equation}
for all $u \in U$.
\end{corollary}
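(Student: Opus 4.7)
The plan is to reduce the corollary to the preceding interpolation inequality by two elementary moves: self-adjointness of the fractional power $(F^*F)^\nu$, followed by Cauchy--Schwarz.

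First, I would note that the operator $F^*F \colon U \to U$ is bounded, self-adjoint, and positive semi-definite. Hence, via the Borel functional calculus, the fractional power $(F^*F)^\nu$ is well-defined as a bounded self-adjoint operator for any $\nu \ge 0$ (and in particular for $0 \le \nu \le 1/2$). Using the source representation $\xi = (F^*F)^\nu \omega$ and the self-adjointness of $(F^*F)^\nu$, I can rewrite
\[
\langle \xi, u \rangle = \langle (F^*F)^\nu \omega, u\rangle = \langle \omega, (F^*F)^\nu u\rangle.
\]

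Next, applying the Cauchy--Schwarz inequality to the right hand side yields
\[
\langle \xi, u\rangle \le \lVert \omega\rVert\, \lVert (F^*F)^\nu u\rVert.
\]
The final step is to invoke the interpolation inequality~\eqref{eq:interpol_orig}, which bounds $\lVert (F^*F)^\nu u\rVert$ by $\lVert Fu\rVert^{2\nu}\lVert u\rVert^{1-2\nu}$, exactly as required in~\eqref{eq:interpol}.

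There is no real obstacle here; the corollary is a direct consequence of the preceding lemma together with the abstract fact that $(F^*F)^\nu$ is self-adjoint. The only point that might warrant a line of justification is the self-adjointness, but since $F^*F$ is itself positive and self-adjoint this follows immediately from the spectral theorem.
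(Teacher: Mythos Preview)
Your proof is correct and follows exactly the same route as the paper: rewrite $\langle \xi,u\rangle$ using the self-adjointness of $(F^*F)^\nu$, apply Cauchy--Schwarz, and then invoke the interpolation inequality~\eqref{eq:interpol_orig}. The only difference is that you spell out the justification for self-adjointness, which the paper leaves implicit.
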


\begin{proof}
With the interpolation inequality~\eqref{eq:interpol_orig} we have
\[
\langle\xi,u\rangle
= \langle(F^*F)^\nu\omega,u\rangle
= \langle\omega,(F^*F)^\nu u\rangle
\le \lVert \omega\rVert\lVert(F^*F)^\nu u\rVert
\le \lVert\omega\rVert\lVert Fu\rVert^{2\nu} \lVert u \rVert^{1-2\nu},
\]
which proves the assertion.
\end{proof}

\section{Basic convergence rates}

We consider first the case of a lower order fractional source condition
of the form 
\[
\xi^\dagger \in \range (F^*F)^{\nu} \cap \partial\mathcal{R}(u^\dagger)
\]
with $0 < \nu \le 1/2$ without any additional
conditions on the regularisation term $\mathcal{R}$. 
The limiting case $\nu = 1/2$ can be equivalently written as the more
standard source condition $\xi^\dagger \in \range(F^*) \cap \partial\mathcal{R}(u^\dagger)$,
for which it is well known that one obtains a convergence rate
\[
\D_{\xi^\dagger}(u_\alpha^\delta,u^\dagger) \lesssim \delta
\qquad\text{ for }\qquad \alpha \sim \delta.
\]
The following result shows that a weaker source condition leads
to a correspondingly slower convergence.

\begin{theorem}\label{th:rates1}
Assume that there exists
\[
\xi^\dagger := (F^*F)^{\nu}\omega^\dagger \in \partial\mathcal{R}(u^\dagger)
\]
for some $0 < \nu \le 1/2$.
Then
\[
\D_{\xi^\dagger}(u_\alpha^\delta,u^\dagger)
\lesssim C_1\frac{\delta^2}{\alpha} + C_2\delta^{2\nu} + C_3\alpha^{\frac{\nu}{1-\nu}}.
\]
for some constants $C_1$, $C_2$, $C_3 > 0$ whenever $\delta^2/\alpha$ is uniformly bounded.
In particular, one obtains with a parameter choice 
\[
\alpha(\delta) \sim \delta^{2-2\nu}
\]
a convergence rate 
\[
\D_{\xi^\dagger}(u_\alpha^\delta,u^\dagger) \lesssim \delta^{2\nu}.
\]
\end{theorem}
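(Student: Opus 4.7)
The plan is to start from the defining minimality inequality~\eqref{eq:bdh1} and rewrite the difference of regularisation values as a Bregman distance plus a linear term that can be controlled by the interpolation inequality~\eqref{eq:interpol}. Concretely, subtracting $\alpha\langle\xi^\dagger,u_\alpha^\delta-u^\dagger\rangle$ from both sides of~\eqref{eq:bdh1} gives
\[
\tfrac{1}{2}\lVert Fu_\alpha^\delta - v^\delta\rVert^2 + \alpha\, \D_{\xi^\dagger}(u_\alpha^\delta,u^\dagger)
\le \tfrac{\delta^2}{2} - \alpha\langle\xi^\dagger, u_\alpha^\delta - u^\dagger\rangle,
\]
so everything reduces to bounding the last inner product.

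Next, I would apply Corollary~\eqref{eq:interpol} with $\omega = \omega^\dagger$ to the vector $u_\alpha^\delta - u^\dagger$. By Remark~\ref{re:bd} the family $u_\alpha^\delta$ is uniformly bounded whenever $\delta^2/\alpha$ is, so $\lVert u_\alpha^\delta - u^\dagger\rVert^{1-2\nu}$ is absorbed into a constant. The residual $\lVert F(u_\alpha^\delta - u^\dagger)\rVert$ is then split via the triangle inequality into $\lVert Fu_\alpha^\delta - v^\delta\rVert + \delta$, and because $2\nu\le 1$ the map $t\mapsto t^{2\nu}$ is subadditive, yielding a bound of the form
\[
\alpha\,\lvert\langle\xi^\dagger,u_\alpha^\delta-u^\dagger\rangle\rvert
\le C\,\alpha\bigl(\lVert Fu_\alpha^\delta - v^\delta\rVert^{2\nu} + \delta^{2\nu}\bigr).
\]

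The key step is then Young's inequality on the mixed term $\alpha\,\lVert Fu_\alpha^\delta - v^\delta\rVert^{2\nu}$, with conjugate exponents $p=1/\nu$ and $p'=1/(1-\nu)$, and a sufficiently small prefactor so that the resulting $\lVert Fu_\alpha^\delta - v^\delta\rVert^2$ term can be absorbed into the left-hand side. This produces
\[
\tfrac{1}{4}\lVert Fu_\alpha^\delta - v^\delta\rVert^2 + \alpha\, \D_{\xi^\dagger}(u_\alpha^\delta,u^\dagger)
\le \tfrac{\delta^2}{2} + \tilde{C}\,\alpha^{1/(1-\nu)} + C\,\alpha\,\delta^{2\nu}.
\]
Dividing through by $\alpha$ gives the three-term bound claimed in the theorem, with $C_1\delta^2/\alpha$ from the data misfit, $C_2\delta^{2\nu}$ from the noise part of the interpolation, and $C_3\alpha^{\nu/(1-\nu)}$ from the Young splitting.

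Finally, balancing the three terms with respect to $\alpha$: the parameter choice $\alpha\sim\delta^{2-2\nu}$ makes $\delta^2/\alpha\sim\delta^{2\nu}$ and $\alpha^{\nu/(1-\nu)}\sim\delta^{2\nu}$, so all three contributions are of order $\delta^{2\nu}$. The only genuinely delicate point is calibrating the Young constant so that the misfit term is absorbed without spoiling the dependence on $\nu$ as $\nu\to 1/2$; apart from that, everything is bookkeeping.
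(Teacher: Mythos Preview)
Your argument is correct and follows essentially the same route as the paper: both proofs combine the minimality inequality with the interpolation estimate~\eqref{eq:interpol}, absorb $\lVert u_\alpha^\delta-u^\dagger\rVert^{1-2\nu}$ into a constant via Remark~\ref{re:bd}, split the residual using subadditivity of $t\mapsto t^{2\nu}$, and then apply Young's inequality with exponents $1/\nu$ and $1/(1-\nu)$ to trade the mixed term for $\alpha^{\nu/(1-\nu)}$. The only cosmetic difference is that the paper divides by $\alpha$ first and carries the negative misfit term $-\tfrac{1}{2\alpha}\lVert Fu_\alpha^\delta-v^\delta\rVert^2$ on the right, whereas you keep $\tfrac{1}{2}\lVert Fu_\alpha^\delta-v^\delta\rVert^2$ on the left and divide at the end; the resulting estimates are identical. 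Your closing worry about the Young constant as $\nu\to 1/2$ is unfounded: at $\nu=1/2$ the exponents become $2$ and $2$, the splitting degenerates to the elementary inequality $ab\le \tfrac{1}{4}a^2+b^2$, and $\alpha^{\nu/(1-\nu)}\to\alpha$, which is exactly the classical $\nu=1/2$ rate---so no calibration issue arises.
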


\begin{proof}
We will only consider the case $0 < \nu < 1/2$,
the case $\nu = 1/2$ having already been treated in~\cite{BurOsh04}.

Since $\xi^\dagger = (F^*F)^\nu\omega^\dagger$, we can apply the 
interpolation inequality~\eqref{eq:interpol}, which yields that
\[
\langle\xi^\dagger,u^\dagger - u\rangle
\le \lVert\omega^\dagger\rVert\lVert F(u^\dagger-u)\rVert^{2\nu}\lVert u-u^\dagger\rVert^{1-2\nu}.
\]
Moreover, the fact that
$u_\alpha^\delta$ minimises the Tikhonov functional implies that
\[
\frac{1}{2}\lVert Fu_\alpha^\delta-v^\delta\rVert^2 + \alpha\mathcal{R}(u_\alpha^\delta)
\le \frac{1}{2}\lVert Fu^\dagger-v^\delta\rVert^2 + \alpha\mathcal{R}(u^\dagger)
\le \frac{1}{2}\delta^2 + \alpha\mathcal{R}(u^\dagger).
\]
Thus
\begin{multline}\label{eq:rateh1}
\D_{\xi^\dagger}(u_\alpha^\delta;u^\dagger)
= \mathcal{R}(u_\alpha^\delta) - \mathcal{R}(u^\dagger) - \langle \xi^\dagger,u_\alpha^\delta-u^\dagger\rangle\\
\le \frac{\delta^2}{2\alpha} - \frac{1}{2\alpha}\lVert Fu_\alpha^\delta-v^\delta\rVert^2
+ \lVert \omega \rVert\lVert F(u^\dagger-u_\alpha^\delta)\rVert^{2\nu}\lVert u^\dagger-u_\alpha^\delta\rVert^{1-2\nu}.
\end{multline}
Using Remark~\ref{re:bd} we see that the term $\lVert u^\dagger-u_\alpha^\delta\rVert$ stays bounded. Using
the fact that
\[
\lVert F(u^\dagger-u_\alpha^\delta)\rVert^{2\nu}
\le \lVert Fu_\alpha^\delta-v^\delta\rVert^{2\nu} + \delta^{2\nu},
\]
we obtain thus from~\eqref{eq:rateh1} the estimate
\[
\D_{\xi^\dagger}(u_\alpha^\delta;u^\dagger)
\le \frac{\delta^2}{2\alpha} + C\delta^{2\nu} - \frac{1}{2\alpha}\lVert Fu_\alpha^\delta-v^\delta\rVert^2
+ C\lVert Fu_\alpha^\delta-v^\delta\rVert^{2\nu}
\]
for some $C > 0$.
Using Young's inequality $ab \le a^p/p + b^{p_*}/{p_*}$,
we see that
\[
C\lVert Fu_\alpha^\delta-v^\delta\rVert^{2\nu} \le \frac{1}{2\alpha}\lVert Fu_\alpha^\delta-v^\delta\rVert^2
+ \tilde{C}\alpha^{\frac{\nu}{1-\nu}}
\]
for some $\tilde{C} > 0$,
and thus
\[
\D_{\xi^\dagger}(u_\alpha^\delta;u^\dagger)
\le \frac{\delta^2}{2\alpha} + C\delta^{2\nu} + \tilde{C}\alpha^{\frac{\nu}{1-\nu}}.
\]
Now the rate follows immediately by inserting the parameter choice 
$\alpha \sim \delta^{2-2\nu}$.
\end{proof}

\begin{remark}\label{re:quad_low}
In quadratic Tikhonov regularisation with
\[
\mathcal{R}(u) = \frac{1}{2}\lVert u \rVert^2
\]
we have that
\[
\partial\mathcal{R}(u^\dagger) = u^\dagger
\qquad\text{ and }\qquad
\D_{u^\dagger}(u,u^\dagger) = \frac{1}{2}\lVert u -u^\dagger \rVert^2.
\]
Thus the condition of Theorem~\ref{th:rates1} reduces to the classical
(lower order) source condition
\[
u^\dagger \in \range(F^*F)^\nu
\qquad\text{ with }\qquad 0 < \nu \le 1/2.
\]
The convergence rate obtained in Theorem~\ref{th:rates1}, however, would be
\[
\lVert u_\alpha^\delta - u^\dagger \rVert \lesssim \delta^\nu
\qquad\text{ with }\qquad
\alpha \sim \delta^{2-2\nu}.
\]
In contrast, it is well known (see e.g~\cite{Gro84}) that a parameter choice
\[
\alpha \sim \delta^{\frac{2}{2\nu + 1}}
\]
leads to a convergence rate
\[
\lVert u_\alpha^\delta - u^\dagger \rVert \lesssim \delta^{\frac{2\nu}{2\nu + 1}}.
\]
Since $\nu > 2\nu/(2\nu+1)$ for $0 < \nu < 1/2$, this convergence rate
is faster than the one obtained in the Theorem~\ref{th:rates1}.
The reason for this discrepancy can be found in the inequality~\eqref{eq:rateh1},
after which we estimate the term $\lVert u^\dagger-u_\alpha^\delta\rVert$ simply
by a constant. Here better estimates are possible, if we can use some power
of the Bregman distance in order to bound this term from above.
For quadratic regularisation, this is obviously possible, as the Bregman
distance is essentially the squared norm.
More general instances of this situation will be discussed in the following section.
\end{remark}

\section{Convergence rates for $p$-convex functionals}

As discussed above, in order to obtain stronger results, we need to require a stronger form of convexity
for the regularisation term $\mathcal{R}$.

\begin{definition}\label{de:pconvex}
Let $1 \le p < +\infty$. We say that the functional $\mathcal{R}\colon U \to [0,+\infty]$
is locally $p$-convex, if there exists for each $u \in \domain \partial\mathcal{R}$
and every $R > 0$ some constant $C = C(u,R) > 0$ such that
\[
C \lVert \tilde{u}-u\rVert^p \le \D_{\xi}(\tilde{u},u)
\]
for all $\xi \in \partial\mathcal{R}(u)$ and all $\tilde{u} \in U$
with $\lVert \tilde{u} - u \rVert \le R$.
\end{definition}

\begin{theorem}\label{th:pconv}
Assume that $\mathcal{R}$ is locally $p$-convex for some $p \ge 1$ and that there exists
\[
\xi^\dagger := (F^*F)^\nu \omega^\dagger \in \partial\mathcal{R}(u^\dagger)
\]
for some $0 < \nu < 1/2$.
Then there exist constants $C_1$, $C_2 > 0$ such that
\[
\D_{\xi^\dagger}(u_\alpha^\delta,u^\dagger) \le C_1 \frac{\delta^2}{\alpha} + C_2 \alpha^{\frac{\nu p}{p-1-p\nu+2\nu}}
\]
whenever $\delta^2/\alpha$ is uniformly bounded.
In particular, we obtain with a parameter choice
\[
\alpha(\delta) \sim \delta^{\frac{2p-2-2p\nu+4\nu}{p-1+\nu}}
\]
the convergence rate
\[
\D_{\xi^\dagger}(u_\alpha^\delta,u^\dagger) \lesssim \delta^{\frac{2\nu p}{p-1+2\nu}}.
\]
\end{theorem}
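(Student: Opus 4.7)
The plan is to mimic the proof of Theorem~\ref{th:rates1}, but to exploit local $p$-convexity so that the crude boundedness estimate for $\lVert u_\alpha^\delta - u^\dagger \rVert$ gets replaced by one involving a power of the Bregman distance itself. As in that earlier proof, combining the subgradient definition of $D := \D_{\xi^\dagger}(u_\alpha^\delta,u^\dagger)$ with the interpolation inequality~\eqref{eq:interpol} and the Tikhonov optimality inequality~\eqref{eq:bdh1} yields, writing $r := \lVert F u_\alpha^\delta - v^\delta \rVert$,
\[
D \le \frac{\delta^2}{2\alpha} - \frac{r^2}{2\alpha} + C \lVert F(u^\dagger-u_\alpha^\delta)\rVert^{2\nu}\lVert u^\dagger - u_\alpha^\delta\rVert^{1-2\nu}.
\]

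The decisive new input is local $p$-convexity. Remark~\ref{re:bd} guarantees a uniform bound on $\lVert u_\alpha^\delta - u^\dagger\rVert$ as long as $\delta^2/\alpha$ stays bounded, and Definition~\ref{de:pconvex} then supplies a constant $c>0$ with $c\lVert u_\alpha^\delta - u^\dagger\rVert^p \le D$. Setting $\gamma := (1-2\nu)/p$, which lies in $(0,1)$ because $0 < \nu < 1/2 \le p/2$, this transforms the preceding bound into the self-improving inequality
\[
D \le \frac{\delta^2}{2\alpha} - \frac{r^2}{2\alpha} + C'\, \lVert F(u^\dagger-u_\alpha^\delta)\rVert^{2\nu}\, D^{\gamma}.
\]

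The rest of the argument is two applications of Young's inequality with carefully chosen scalings. In the first, I use $\lVert F(u^\dagger - u_\alpha^\delta)\rVert^2 \le 2r^2 + 2\delta^2$ and apply $ab \le \nu t\, a^{1/\nu} + (1-\nu)\, t^{-\nu/(1-\nu)} b^{1/(1-\nu)}$ with $a = \lVert F(u^\dagger-u_\alpha^\delta)\rVert^{2\nu}$, $b = C'D^\gamma$, and $t$ of order $1/\alpha$, so that the $a^{1/\nu}$-piece is absorbed by $-r^2/(2\alpha)$ at the cost of an extra $\delta^2/\alpha$. This reduces the estimate to
\[
D \le C_1\frac{\delta^2}{\alpha} + C_2\, \alpha^{\nu/(1-\nu)}\, D^{\gamma/(1-\nu)}.
\]
A second Young's inequality, now splitting $\alpha^{\nu/(1-\nu)} D^{\gamma/(1-\nu)}$ into $\tfrac12 D$ plus a pure power of $\alpha$, yields the residual exponent
\[
\frac{\nu}{1-\nu}\cdot\frac{1}{1-\gamma/(1-\nu)} = \frac{\nu p}{p-1-p\nu+2\nu},
\]
provided $\gamma/(1-\nu)<1$, which reduces to $p > (1-2\nu)/(1-\nu)$ and is automatic since $p\ge 1$ and $\nu>0$. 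Rearranging gives the advertised abstract bound, and balancing $\delta^2/\alpha$ against $\alpha^{\nu p/(p-1-p\nu+2\nu)}$ yields the stated parameter choice and convergence rate.

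The main technical obstacle is the exponent bookkeeping: the scaling parameter $t$ in the first Young's inequality has to be tuned exactly so that the remaining $\alpha$-exponent becomes $\nu/(1-\nu)$, and the composite exponent after the second Young's inequality must then be verified to collapse to $\nu p/(p-1-p\nu+2\nu)$. All other ingredients (the subgradient inequality, the optimality bound, the interpolation inequality, and the uniform boundedness from Remark~\ref{re:bd}) are used exactly as in the proof of Theorem~\ref{th:rates1}.
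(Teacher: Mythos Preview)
Your proof is correct and follows essentially the same route as the paper: start from the estimate~\eqref{eq:rateh1}, replace $\lVert u_\alpha^\delta - u^\dagger\rVert^{1-2\nu}$ by $D^{(1-2\nu)/p}$ via local $p$-convexity, and then use Young's inequality to absorb both the residual term and the $D$-power. The only cosmetic difference is that the paper packages the splitting as a single three-term Young's inequality $abc \le a^r/r + b^s/s + c^t/t$ with $r = p/(p-1-p\nu+2\nu)$, $s = 1/\nu$, $t = p/(1-2\nu)$, whereas you achieve the same effect by two successive two-term Young's inequalities; the resulting exponent $\nu p/(p-1-p\nu+2\nu)$ and the remaining bookkeeping are identical.
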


\begin{proof}
As in the proof of Theorem~\ref{th:rates1} we obtain the estimate (cf.~inequality~\eqref{eq:rateh1})
\[
\D_{\xi^\dagger}(u_\alpha^\delta;u^\dagger)
\le \frac{\delta^2}{2\alpha} - \frac{1}{2\alpha}\lVert Fu_\alpha^\delta-v^\delta\rVert^2
+ \lVert\omega\rVert\lVert F(u^\dagger-u_\alpha^\delta)\rVert^{2\nu}\lVert u^\dagger-u_\alpha^\delta\rVert^{1-2\nu}.
\]
Again, it follows from Remark~\ref{re:bd} that we can assume the term $\lVert u^\dagger - u_\alpha^\delta \rVert$ 
to be bounded. Thus the local $p$-convexity of $\mathcal{R}$ implies the existence of a constant $C$
such that
\[
\lVert u^\dagger-u_\alpha^\delta\rVert \le C\D_{\xi^\dagger}(u_\alpha^\delta,u^\dagger)^{\frac{1}{p}}
\]
and we obtain the estimate
\begin{equation}\label{eq:h2_a}
\D_{\xi^\dagger}(u_\alpha^\delta;u^\dagger)
\le \frac{\delta^2}{2\alpha} - \frac{1}{2\alpha}\lVert Fu_\alpha^\delta-v^\delta\rVert^2
+ C^{1-2\nu}\lVert \omega\rVert\lVert F(u^\dagger-u_\alpha^\delta)\rVert^{2\nu}\D_{\xi^\dagger}(u_\alpha^\delta,u^\dagger)^{\frac{1-2\nu}{p}}.
\end{equation}
We now apply Young's inequality
\begin{equation}\label{eq:Y3}
abc \le \frac{1}{r}a^r + \frac{1}{s}b^s + \frac{1}{t}c^t
\text{ for } a,\,b,\,c > 0 \text{ and } r,\,s,\,t > 1 \text{ with }
\frac{1}{r}+\frac{1}{s}+\frac{1}{t}=1
\end{equation}
with
\[
\begin{aligned}
a &= C^{1-2\nu}\frac{(4\alpha)^\nu \lVert \omega^\dagger\rVert}{\nu^\nu} , &&& r &= \frac{p}{p-1-p\nu + 2\nu},\\
b &= \frac{\nu^\nu}{(4\alpha)^\nu}\lVert F(u^\dagger-u_\alpha^\delta)\rVert^{2\nu}, &&& s &= \frac{1}{\nu},\\
c &= D_{\xi^\dagger}(u_\alpha^\delta,u^\dagger)^{\frac{1-2\nu}{p}}, &&& t &= \frac{p}{1-2\nu},
\end{aligned}
\]
which results in the bound
\begin{equation}\label{eq:h2_b}
\lVert\omega^\dagger\rVert \lVert F(u^\dagger-u_\alpha^\delta)\rVert
\le \tilde{C} \alpha^{\frac{\nu p}{p-1-p\nu+2\nu}} + \frac{1}{4\alpha} \lVert F(u^\dagger-u_\alpha^\delta)\rVert^2
+ \frac{1-2\nu}{p} \D_{\xi^\dagger}(u_\alpha^\delta,u^\dagger)
\end{equation}
for some constant $\tilde{C} > 0$.
Using that
\[
\lVert F(u^\dagger-u_\alpha^\delta)\rVert^2 \le 2 \lVert Fu_\alpha^\delta - v^\delta \rVert^2 + 2 \lVert Fu^\dagger - v^\delta\rVert^2
\le 2\lVert Fu_\alpha^\delta - v^\delta\rVert^2 + 2\delta^2,
\]
and combining~\eqref{eq:h2_a} with~\eqref{eq:h2_b}, we obtain the required inequality
\[
\D_{\xi^\dagger}(u_\alpha^\delta,u^\dagger)
\le C_1 \frac{\delta^2}{\alpha} + C_2 \alpha^{\frac{\nu p}{p-1-p\nu+2\nu}}
\]
for some $C_1$, $C_2$, $C_3 > 0$.
The two terms on the right hand side of this estimate balance for
\[
\alpha \sim \delta^{\frac{2p-2-2p\nu+4\nu}{p-1+2\nu}},
\]
in which case we obtain the convergence rate
\[
\D_{\xi^\dagger}(u_\alpha^\delta,u^\dagger) \lesssim \delta^{\frac{2\nu p}{p-1+2\nu}}.
\]
\end{proof}

\begin{remark}\label{re:quad_low2}
Assume that the assumptions of Theorem~\ref{th:pconv} are satisfied.
Because of the local $p$-convexity of $\mathcal{R}$,
we then obtain in addition a convergence rate in terms of the norm of the form
\[
\lVert u_\alpha^\delta - u^\dagger\rVert \lesssim \delta^{\frac{2\nu}{p-1+2\nu}}.
\]
In the particular case of a $2$-convex regularisation term, we recover
the familiar convergence rate
\[
\lVert u_\alpha^\delta - u^\dagger\rVert \lesssim \delta^{\frac{2\nu}{1+2\nu}}
\qquad\text{ for }\qquad
\xi^\dagger \in \range(F^*F)^\nu,\ 0 < \nu \le 1/2,
\]
with a parameter choice
\[
\alpha \sim \delta^{\frac{2}{1+2\nu}},
\]
which is the same as we obtain for quadratic Tikhonov regularisation
(cf.~Remarks~\ref{re:quad_low}).
\end{remark}

\section{Higher order rates}

We will now consider higher order source conditions
\[
\xi^\dagger \in \range(F^*F)^\nu \cap \partial\mathcal{R}(u^\dagger)
\qquad\text{ with }\qquad \frac{1}{2} < \nu \le 1.
\]
Here it turns out that a strong type of convexity appears not to be needed
to obtain higher order convergence rates. Instead, it is the convexity of
the conjugate of the regularisation term $\mathcal{R}$ that needs to be controlled.

\begin{definition}\label{de:qcoconvex}
Let $1 \le q < +\infty$.
We say that the functional $\mathcal{R}\colon U \to [0,+\infty]$
is locally $q$-coconvex, if there exists for all $R > 0$
some constant $C = C(R) > 0$ such that
\[
C\lVert \xi_1-\xi_2\rVert^q \le \D_{\xi_1,\xi_2}^{{\rm sym}}(u_1,u_2)
= \langle \xi_1 - \xi_2, u_1 - u_2 \rangle
\]
for all $u_1$, $u_2 \in \domain\partial\mathcal{R}$ with $\lVert u_i \rVert \le R$,
where
\[
\xi_1 \in \partial\mathcal{R}(u_1)
\qquad\text{ and }\qquad
\xi_2 \in \partial\mathcal{R}(u_2).
\]
\end{definition}

\begin{remark}
Instead of the original functional $\mathcal{R}$, we can also
consider its convex conjugate $\mathcal{R}^*$ and the dual
Bregman distances
\[
\D^*_u(\tilde{\xi},\xi) 
= \mathcal{R}^*(\tilde{\xi}) - \mathcal{R}^*(\xi) - \langle u,\tilde{\xi}-\xi\rangle
\qquad\text{ with }\qquad u \in \partial\mathcal{R}^*(\xi)
\]
and
\[
\D^{{\rm sym},*}_{u,\tilde{u}}(\xi,\tilde{\xi}) := \D^*_u(\tilde{\xi},\xi) + \D^*_{\tilde{u}}(\xi,\tilde{\xi})
\quad\text{ with } u \in \partial\mathcal{R}^*(\xi)
\text{ and } \tilde{u} \in \partial\mathcal{R}^*(\tilde{\xi}).
\]
Then we see that the primal and dual symmetric Bregman distances
are identical in the sense that
\[
\D_{\xi,\tilde{\xi}}^{{\rm sym}}(u,\tilde{u})
= \langle \xi-\tilde{\xi},u-\tilde{u}\rangle
= \D^{{\rm sym},*}_{u,\tilde{u}}(\xi,\tilde{\xi}).
\]
As a consequence, the $q$-coconvexity of $\mathcal{R}$
is equivalent to the $q$-convexity of $\mathcal{R}^*$.
Also, we note that $2$-coconvexity of $\mathcal{R}$ is the same as
cocoercivity of the subgradient $\partial\mathcal{R}$
(cf.~\cite[Sec.~4.2]{BauCom}).
\end{remark}

\begin{theorem}\label{th:higher}
Assume that $\mathcal{R}$ is locally $q$-coconvex for some $q \ge 1$ and that
\[
\xi^\dagger := (F^*F)^\nu \eta^\dagger \in \partial\mathcal{R}(u^\dagger)
\]
for some $1/2 < \nu \le 1$. Then there exist constants $C_1$, $C_2 > 0$ such that
\[
\D_{\xi_\alpha^\delta,\xi^\dagger}^{{\rm sym}}(u_\alpha^\delta,u^\dagger)
\le C_1\frac{\delta^2}{\alpha} + C_2 \alpha^{\frac{q\nu}{1+\nu q -2\nu}}.
\]
whenever $\delta^2/\alpha$ is uniformly bounded.
In particular, we obtain with a parameter choice
\[
\alpha \sim \delta^{\frac{2+2\nu q - 4\nu}{1+2\nu q - 2\nu}}
\]
the convergence rate
\[
\D_{\xi_\alpha^\delta,\xi^\dagger}^{{\rm sym}}(u_\alpha^\delta,u^\dagger)
\lesssim \delta^{\frac{2\nu q}{1+2\nu q - 2\nu}}.
\]
\end{theorem}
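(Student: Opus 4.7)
The plan is to pass to the dual side, where the $q$-coconvexity of $\mathcal{R}$ directly controls $\lVert\xi_\alpha^\delta - \xi^\dagger\rVert$. I would first introduce the dual variable $\omega_\alpha^\delta := (v^\delta - Fu_\alpha^\delta)/\alpha \in V$, so that the optimality condition for $u_\alpha^\delta$ reads $\xi_\alpha^\delta = F^*\omega_\alpha^\delta$. Since $\nu > 1/2$, the identity $\range((F^*F)^{1/2}) = \range(F^*)$ together with spectral calculus allows one to write $\xi^\dagger = F^*\tilde\omega^\dagger$, where $\tilde\omega^\dagger := (FF^*)^{\nu-1/2}\omega_0^\dagger \in V$ for some $\omega_0^\dagger \in V$ with $\lVert\omega_0^\dagger\rVert \le \lVert\eta^\dagger\rVert$ (obtained from $\eta^\dagger$ via the partial isometry in the polar decomposition of $F$). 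Setting $\mu := \omega_\alpha^\delta - \tilde\omega^\dagger$, one has $F^*\mu = \xi_\alpha^\delta - \xi^\dagger$, and hence the $q$-coconvexity gives $\lVert F^*\mu\rVert^q \le C\,\D^{{\rm sym}}_{\xi_\alpha^\delta,\xi^\dagger}(u_\alpha^\delta,u^\dagger)$.

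The next step is a direct computation. Using $Fu_\alpha^\delta - v^\dagger = e - \alpha(\mu + \tilde\omega^\dagger)$ with $e := v^\delta - v^\dagger$, one arrives at the key identity
\[
\D^{{\rm sym}}_{\xi_\alpha^\delta,\xi^\dagger}(u_\alpha^\delta,u^\dagger) + \alpha\lVert\mu\rVert^2
= \langle F^*\mu, u_\alpha^\delta - u^\dagger\rangle + \alpha\lVert\mu\rVert^2
= \langle\mu, e\rangle - \alpha\langle\mu, \tilde\omega^\dagger\rangle.
\]
The right-hand side is then estimated by $|\langle\mu,e\rangle| \le \delta\lVert\mu\rVert$ and, for the cross term, by the interpolation inequality applied to $F^*$ (valid since $\nu - 1/2 \in (0,1/2]$) together with $q$-coconvexity:
\[
|\langle\mu,\tilde\omega^\dagger\rangle|
= |\langle(FF^*)^{\nu-1/2}\mu, \omega_0^\dagger\rangle|
\le \lVert\omega_0^\dagger\rVert\,\lVert F^*\mu\rVert^{2\nu-1}\lVert\mu\rVert^{2-2\nu}
\le C'\bigl(\D^{{\rm sym}}_{\xi_\alpha^\delta,\xi^\dagger}(u_\alpha^\delta,u^\dagger)\bigr)^{(2\nu-1)/q}\lVert\mu\rVert^{2-2\nu}.
\]

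Closing the estimate is a matter of Young's inequality. I would absorb $\delta\lVert\mu\rVert \le \tfrac{\alpha}{2}\lVert\mu\rVert^2 + \tfrac{\delta^2}{2\alpha}$, and handle the three-term product $C'\alpha(\D^{{\rm sym}})^{(2\nu-1)/q}\lVert\mu\rVert^{2-2\nu}$ by the three-term Young inequality $abc \le a^r/r + b^s/s + c^t/t$ with conjugate exponents $r = q/(1+\nu q - 2\nu)$, $s = q/(2\nu - 1)$, $t = 1/(1-\nu)$ (one checks $1/r + 1/s + 1/t = 1$), rescaled so that the coefficients of $\lVert\mu\rVert^2$ and $\D^{{\rm sym}}$ are respectively at most $\alpha/4$ and $1/2$ (which is possible as long as $(2\nu-1)/q < 1$). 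The residual $\alpha$-only term is then of order $\alpha^{q\nu/(1+\nu q - 2\nu)}$. Absorbing the $\lVert\mu\rVert^2$ and $\D^{{\rm sym}}$ contributions into the left-hand side of the key identity yields $\D^{{\rm sym}}_{\xi_\alpha^\delta,\xi^\dagger}(u_\alpha^\delta,u^\dagger) \lesssim \delta^2/\alpha + \alpha^{q\nu/(1+\nu q - 2\nu)}$, from which the stated parameter choice and rate follow by balancing.

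The main obstacle I expect is finding the dual reformulation in the first place. A primal-side approach in the spirit of Theorems~\ref{th:rates1} and~\ref{th:pconv} would bound $|\langle\xi^\dagger, u_\alpha^\delta - u^\dagger\rangle|$ by interpolation on $r := F(u_\alpha^\delta - u^\dagger)$ and then estimate $\lVert F^*r\rVert \le \alpha\lVert\xi_\alpha^\delta\rVert + \lVert F\rVert\delta$ via optimality; but the resulting $\alpha\lVert\xi^\dagger\rVert$ contribution forces, after Young's inequality and division by $\alpha$, an unabsorbable $\alpha$-term on the right that saturates the rate at $\delta$ regardless of how large $\nu > 1/2$ or $q$ is. Passing to $\mu = \omega_\alpha^\delta - \tilde\omega^\dagger$ removes this floor because the interpolation is then applied to $\mu$ itself and $\lVert F^*\mu\rVert = \lVert\xi_\alpha^\delta - \xi^\dagger\rVert$ is precisely the quantity that $q$-coconvexity controls in terms of $\D^{{\rm sym}}$.
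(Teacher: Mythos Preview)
Your proposal is correct and follows essentially the same route as the paper: pass to dual variables $\omega_\alpha^\delta$ and $\tilde\omega^\dagger$ with $\xi_\alpha^\delta=F^*\omega_\alpha^\delta$ and $\xi^\dagger=F^*\tilde\omega^\dagger$, derive the identity $\D^{{\rm sym}}+\alpha\lVert\omega_\alpha^\delta-\tilde\omega^\dagger\rVert^2=\langle\omega_\alpha^\delta-\tilde\omega^\dagger,v^\delta-v^\dagger\rangle-\alpha\langle\omega_\alpha^\delta-\tilde\omega^\dagger,\tilde\omega^\dagger\rangle$, bound the cross term via the interpolation inequality and $q$-coconvexity, and close with a two-term plus three-term Young inequality with the same exponents. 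The only minor points to add are that the local $q$-coconvexity constant requires the uniform bound on $\lVert u_\alpha^\delta\rVert$ from Remark~\ref{re:bd}, and the boundary case $\nu=1$ (where your exponent $t=1/(1-\nu)$ is undefined) should be handled separately as a simpler two-factor estimate, as the paper also notes.
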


\begin{proof}
Denote
\[
\mu = \nu - \frac{1}{2}.
\]
Since
\[
\range (F^* F)^\nu
=  \range (F^*F)^{\mu + \frac{1}{2}}
= \range (F^* (FF^*)^\mu),
\]
it follows that we can write
\[
\xi^\dagger = F^*\omega^\dagger
\qquad\text{ with }\qquad
\omega^\dagger = (FF^*)^\mu \tilde{\eta}^\dagger
\]
for some $\tilde{\eta}^\dagger \in U$.

Because $u_\alpha^\delta$ is a minimiser of the Tikhonov functional
$\mathcal{T}_\alpha(\cdot,v^\delta)$, it satisfies the first order optimality
condition
\[
F^*(Fu_\alpha^\delta - v^\delta) + \alpha\partial\mathcal{R}(u_\alpha^\delta) \ni 0.
\]
Denoting by
\[
\xi_\alpha^\delta \in \partial\mathcal{R}(u_\alpha^\delta)
\]
the corresponding subgradient of $\mathcal{R}$, it follows that
\[
-\alpha\xi_\alpha^\delta = F^*(Fu_\alpha^\delta-v^\delta).
\]
Or, we can write
\begin{equation}\label{eq:KKT}
\xi_\alpha^\delta = F^* \omega_\alpha^\delta
\qquad\text{ with }\qquad
-\alpha\omega_\alpha^\delta = Fu_\alpha^\delta - v^\delta.
\end{equation}
As a consequence, we have
\begin{equation}\label{eq:h3_a}
\begin{aligned}
\D_{\xi_\alpha^\delta,\xi^\dagger}^{{\rm sym}}(u_\alpha^\delta,u^\dagger)
&= \langle \xi_\alpha^\delta-\xi^\dagger,u_\alpha^\delta-u^\dagger\rangle\\
&= \langle F^*\omega_\alpha^\delta - F^*\omega^\dagger,u_\alpha^\delta-u^\dagger\rangle\\
&= \langle \omega_\alpha^\delta - \omega^\dagger, Fu_\alpha^\delta - Fu^\dagger\rangle\\
&= \langle \omega_\alpha^\delta-\omega^\dagger, Fu_\alpha^\delta - v^\delta \rangle
+ \langle \omega_\alpha^\delta - \omega^\dagger, v^\delta - v^\dagger\rangle \\
&= -\alpha\langle\omega_\alpha^\delta - \omega^\dagger, \omega_\alpha^\delta \rangle
+ \langle \omega_\alpha^\delta - \omega^\dagger,v^\delta - v^\dagger\rangle\\
&= - \alpha \lVert \omega_\alpha^\delta - \omega^\dagger \rVert^2 
- \alpha \langle \omega_\alpha^\delta - \omega^\dagger,\omega^\dagger\rangle
+ \langle \omega_\alpha^\delta - \omega^\dagger,v^\delta - v^\dagger\rangle\\
&\le  - \alpha\lVert \omega_\alpha^\delta - \omega^\dagger\rVert^2
- \alpha \langle \omega_\alpha^\delta - \omega^\dagger,\omega^\dagger\rangle
+ \delta\lVert \omega_\alpha^\delta-\omega^\dagger\rVert.
\end{aligned}
\end{equation}
We next use the interpolation inequality and the definitions of $\omega_\alpha^\delta$
and $\omega^\dagger$ and obtain
\begin{equation}\label{eq:h3_b}
\begin{aligned}
-\langle \omega_\alpha^\delta - \omega^\dagger,\omega^\dagger\rangle
&= -\langle \omega_\alpha^\delta - \omega^\dagger,(FF^*)^\mu \tilde{\eta}^\dagger\rangle\\
&\le \lVert \tilde{\eta}^\dagger \rVert 
\lVert F^*(\omega_\alpha^\delta-\omega^\dagger)\rVert^{2\mu} 
\lVert \omega_\alpha^\delta-\omega^\dagger\rVert^{1-2\mu}\\
&= \lVert \tilde{\eta}^\dagger \rVert \lVert \xi_\alpha^\delta - \xi^\dagger \rVert^{2\mu}
\lVert \omega_\alpha^\delta - \omega^\dagger\rVert^{1-2\mu}.
\end{aligned}
\end{equation}
Now we can use the local $q$-coconvexity of $\mathcal{R}$ and the boundedness
of $u_\alpha^\delta$ (see Remark~\ref{re:bd}) to estimate
\[
\lVert \xi_\alpha^\delta - \xi^\dagger \rVert
\le C\D_{\xi_\alpha^\delta,\xi^\dagger}^{{\rm sym}}(u_\alpha^\delta,u^\dagger)^{1/q}
\]
and obtain from~\eqref{eq:h3_a} and~\eqref{eq:h3_b} the bound
\begin{multline}\label{eq:h3_c}
\D_{\xi_\alpha^\delta,\xi^\dagger}^{{\rm sym}}(u_\alpha^\delta,u^\dagger)\\
\le C\alpha \lVert \tilde{\eta}^\dagger\rVert \D_{\xi_\alpha^\delta,\xi^\dagger}^{{\rm sym}}(u_\alpha^\delta,u^\dagger)^{\frac{2\mu}{q}}
\lVert \omega_\alpha^\delta - \omega^\dagger\rVert^{1-2\mu}
+  \delta \lVert \omega_\alpha^\delta-\omega^\dagger\rVert- \alpha\lVert \omega_\alpha^\delta - \omega^\dagger\rVert^2.
\end{multline}

In the following, we will only treat the more difficult case $\mu < 1/2$.
For $\mu = 1/2$, the argumentation is similar but simpler, due to the absence of the
term $\lVert \omega_\alpha^\delta - \omega^\dagger\rVert^{1-2\mu}$ in the first
product on the right hand side of~\eqref{eq:h3_c}.

We use first the inequality
\[
\delta \lVert \omega_\alpha^\delta - \omega^\dagger \rVert
\le \frac{\delta^2}{2\alpha} + \frac{\alpha}{2}\lVert \omega_\alpha^\delta - \omega^\dagger\rVert^2
\]
and then the three term Young inequality~\eqref{eq:Y3} with
\[
\begin{aligned}
a &= C(1-2\mu)^{\frac{1-2\mu}{2}}\lVert \tilde{\eta}^\dagger \rVert\alpha^{\frac{1+2\mu}{2}}, &&& r &= \frac{2q}{q+2\mu q - 4\mu},\\
b &= \frac{\alpha^{\frac{1-2\mu}{2}}}{(1-2\mu)^{\frac{1-2\mu}{2}}}\lVert \omega_\alpha^\delta - \omega^\dagger\rVert^{1-2\mu} , &&& s &= \frac{2}{1-2\mu} ,\\
c &= \D_{\xi_\alpha^\delta,\xi^\dagger}^{{\rm sym}}(u_\alpha^\delta,u^\dagger)^{\frac{2\mu}{q}}, &&& t &= \frac{q}{2\mu}.
\end{aligned}
\]
Then we obtain from~\eqref{eq:h3_c} that
\[
\D_{\xi_\alpha^\delta,\xi^\dagger}^{{\rm sym}}(u_\alpha^\delta,u^\dagger)
\le C_1\frac{\delta^2}{\alpha} + C_2 \alpha^{\frac{q+2\mu q}{q+2\mu q - 4\mu}}.
\]
Again, balancing the two terms on the right hand side leads to a parameter choice
\[
\alpha \sim \delta^{\frac{q+2\mu q - 4\mu}{q+2\mu q - 2\mu}}
\]
and a convergence rate
\[
\D_{\xi_\alpha^\delta,\xi^\dagger}^{{\rm sym}}(u_\alpha^\delta,u^\dagger)
\lesssim \delta^{\frac{q+2\mu q}{q+2\mu q - 2\mu}}.
\]
Replacing again $\mu$ by $\nu - \frac{1}{2}$, we obtain the results claimed
in the statement of the theorem.
\end{proof}

\begin{remark}
The equations~\eqref{eq:KKT} are just the KKT conditions for
the optimisation problem $\min_u \mathcal{T}_\alpha(u,v^\delta)$,
and $\omega_\alpha^\delta$ can be just seen as the dual solution
of this problem.
See also~\cite{Gra13}, where the connection to a dual Tikhonov functional
is discussed.
\end{remark}

\begin{remark}
In the case where the regularisation term $\mathcal{R}$ is $2$-coconvex,
the parameter choice and convergence rate simplify to
\[
\D_{\xi_\alpha^\delta,\xi^\dagger}^{{\rm sym}}(u_\alpha^\delta,u^\dagger)
\lesssim \delta^{\frac{4\nu}{1+2\nu}}
\qquad\text{ for }\qquad
\alpha \sim \delta^{\frac{2}{1+2\nu}}.
\]
In the case of quadratic Tikhonov regularisation, these rates turn
out to be identical to the classical rates.
Indeed, the quadratic norm is obviously $2$-coconvex, since we have
for
\[
\mathcal{R}(u) = \frac{1}{2}\lVert u \rVert^2
\]
that
\[
\partial\mathcal{R}(u) = \{u\}
\qquad\text{ and }\qquad
\D_{u_1,u_2}^{{\rm sym}}(u_1,u_2)
= \lVert u_1 - u_2 \rVert^2.
\]
Moreover, the source condition simply reads as
\[
u^\dagger = (F^*F)^\nu \eta^\dagger.
\]
Together with Remark~\ref{re:quad_low2}, which deals with the lower order case, we thus recover the 
classical result that the source condition
\[
u^\dagger \in \range(F^*F)^\nu
\qquad\text{ for some }\qquad 0 < \nu \le 1
\]
implies the convergence rate
\[
\lVert u_\alpha^\delta - u^\dagger \rVert \lesssim \delta^{\frac{2\nu}{2\nu+1}}
\qquad\text{ with }\qquad \alpha \sim \delta^{\frac{2}{2\nu+1}}
\]
for quadratic Tikhonov regularisation.
\end{remark}

\section{Examples}\label{se:examples}

We now study the implications for four different non-quadratic regularisation terms,
all with different convexity properties.

\subsection{$\ell^p$-regularisation}

We consider first the case where $U = \ell^2(I)$ for some countable index set $I$,
and
\[
\mathcal{R}(u) = \frac{1}{p} \lVert u \rVert_{\ell^p}^p = \frac{1}{p}\sum_{i\in I} \lvert u_i \rvert^p
\]
for some $1 < p < 2$.
Because of the embedding $\ell^p \to \ell^2$ for $p < 2$, this term
is coercive and thus Tikhonov regularisation is well-posed.
Also, this regularisation term is $2$-convex and its conjugate
\[
\mathcal{R}^*(\xi) = \frac{1}{p_*} \lVert \xi \rVert_{\ell^{p_*}}^{p_*}
\]
is $p_*$-convex with $p_* = p/(p-1)$ being the H\"older conjugate of $p$,
implying that $\mathcal{R}$ is $p_*$-coconvex (see~\cite{Bon08} for all of these results).
Moreover,
\[
\partial \mathcal{R}(u) = \bigl( u_i \lvert u_i \rvert^{p-2} \bigr)_{i \in I}
\]
whenever $u \in \domain \partial\mathcal{R} = \ell^{p_*}$.

Thus the preceding results imply that a source condition
\[
\xi^\dagger = \bigl( u_i^\dagger \lvert u_i^\dagger \rvert^{p-2} \bigr)_{i \in I}
\in \range (F^*F)^\nu
\]
leads to a convergence rate
\[
\lVert u_\alpha^\delta - u^\dagger \rVert \lesssim \delta^{\frac{2\nu}{1+2\nu}}
\quad\text{ with }\quad
\alpha \sim \delta^{\frac{2}{1+2\nu}}
\qquad\text{ if } 0 < \nu \le \frac{1}{2},
\]
and
\[
\lVert u_\alpha^\delta - u^\dagger \rVert \lesssim \delta^{\frac{p\nu}{p-1+2\nu}}
\quad\text{ with }\quad
\alpha \sim \delta^{\frac{2p-2-2\nu p+4\nu}{p-1+2\nu}}
\qquad\text{ if } \frac{1}{2} < \nu \le 1.
\]

\subsection{$L^p$-regularisation}

Next we study the situation where $\Omega$ is some bounded
domain, $U = L^2(\Omega)$, and
\[
\mathcal{R}(u) = \frac{1}{p}\int_\Omega \lvert u(x) \rvert^p\,dx = \frac{1}{p}\lVert u \rVert_{L^p}^p
\]
for some $2 < p < +\infty$. Here we are in the opposite situation to $\ell^p$-regularisation
in that the exponent has to be larger than $2$ for the regularisation method
to be well-posed.

In this case the regularisation term itself is $p$-convex,
but its conjugate
\[
\mathcal{R}^*(u) = \frac{1}{p_*}\lVert u \rVert_{L^{p_*}}^{p_*}
\]
is $2$-convex (see again~\cite{Bon08}).
Also, we have again the representation of the subgradient of $\mathcal{R}$
as
\[
\partial \mathcal{R}(u) = u \lvert u \rvert^{p-2}
\]
whenever $u \in \domain\partial\mathcal{R} = L^{p_*}$.

As a consequence, due to the $p$-convexity and $2$-coconvexity of the 
regularisation term, the results above imply that the source
condition
\[
\xi^\dagger := u\lvert u \rvert^{p-2} \in \range (F^*F)^\nu
\]
results in the convergence rates
\[
\lVert u_\alpha^\delta - u^\dagger \rVert \lesssim \delta^{\frac{2\nu}{p-1+2\nu}}
\quad\text{ with }\quad
\alpha \sim \delta^{\frac{2p-2-2p\nu + 4\nu}{p-1+\nu}}
\qquad\text{ if } 0 < \nu \le \frac{1}{2},
\]
and
\[
\lVert u_\alpha^\delta - u^\dagger \rVert \lesssim \delta^{\frac{4\nu}{p+2\nu p}}
\quad\text{ with }\quad
\alpha \sim \delta^{\frac{2}{1+2\nu}}
\qquad\text{ if } \frac{1}{2} < \nu \le 1.
\]

\subsection{Total variation regularisation}

The next example we consider is total variation regularisation
with $U = L^2(\Omega)$, $\Omega \subset \R^2$ bounded with Lipschitz boundary,
and
\[
\mathcal{R}(u) = \lvert Du \rvert (\Omega).
\]
As discussed in Remark~\ref{re:coercive}, we have to assume in this case in addition
that constant functions are not contained in the kernel of $F$
in order for the regularisation method to be well-posed.

In the case of total variation regularisation, the regularisation term is
not strictly convex, which implies that the Bregman distance
$\D_\xi(\tilde{u},u)$ may be zero for $\tilde{u} \neq u$.
As a consequence, we cannot bound the Bregman distance from
below by any power of the norm, and therefore the total variation is 
not $p$-convex for any $p$.
On the other hand, the subdifferentials of $\mathcal{R}$ are in general not
single-valued, which implies that the total variation is neither $q$-coconvex
for any $q$.
We thus end up with only the basic results
\[
\D_{\xi^\dagger}(u_\alpha^\delta,u^\dagger) \lesssim \delta^{2\nu}
\quad\text{ with }\quad
\alpha \sim \delta^{2-2\nu}
\]
for a source condition
\[
\xi^\dagger \in \partial\mathcal{R}(u^\dagger) \cap \range(F^*F)^\nu
\qquad\text{ with }\qquad 0 < \nu \le \frac{1}{2}.
\]

\subsection{Huber regularisation}

As final example, we get back to the case $U = \ell^2(I)$ for some
countable index set $I$, but consider now the Huber regularisation term
\[
\mathcal{R}(u) = \sum_{i\in I} \phi(u_i)
\]
with
\[
\phi(t) = \begin{cases}
\frac{1}{2}t^2 & \text{ if } \lvert t \rvert \le 1,\\
\lvert t \rvert - \frac{1}{2} & \text{ if } \lvert t \rvert \ge 1.
\end{cases}
\]
Because $\phi$ is not strictly convex, neither is $\mathcal{R}$,
and thus $\mathcal{R}$ is not $p$-convex for any $p$.
However,
\[
\mathcal{R}^*(\xi) = \sum_{i\in I} \phi^*(\xi_i)
\]
with
\[
\phi^*(\zeta) = \begin{cases}
\frac{1}{2}\zeta^2 & \text{ if } \lvert \zeta \rvert \le 1,\\
+ \infty &  \text{ if } \lvert \zeta > 1,
\end{cases}
\]
which is obviously $2$-convex.
Thus the Huber regularisation term is not $p$-convex for any $p$,
but is $2$-coconvex.

Moreover, we have that
\[
\partial\mathcal{R}(u) = \bigl(\rho(u_i)\bigr)_{i\in I}
\]
with
\[
\rho(t) = \begin{cases}
1 & \text{ if } t \ge 1,\\
t & \text{ if } \lvert t \rvert \le 1,\\
-1 &\text{ if } t \le -1.\\
\end{cases}
\]

Thus we obtain the convergence rates
\[
\D_{\xi^\dagger}(u_\alpha^\delta,u^\dagger) \lesssim \delta^{2\nu}
\quad\text{ with }\quad
\alpha \sim \delta^{2-2\nu}
\qquad\text{ if } 0 < \nu \le \frac{1}{2}
\]
and
\[
\D_{\xi_\alpha^\delta,\xi^\dagger}^{{\rm sym}}(u_\alpha^\delta,u^\dagger)
\lesssim \delta^{\frac{2\nu}{1+2\nu}}
\quad\text{ with }\quad
\alpha \sim \delta^{\frac{2}{1+2\nu}}
\qquad\text{ if } \frac{1}{2} \le \nu \le 1,
\]
provided that a source condition
\[
\bigl(\rho(u_i^\dagger)\bigr)_{i\in I} \in \range (F^*F)^\nu
\]
is satisfied.

\section{Conclusion}

In this paper we have studied the implications of classical
source conditions of power type to accuracy estimates and
convergence rates for non-quadratic Tikhonov regularisation.
We have seen that very basic results can be easily obtained
without any additional conditions concerning, for instance, strong convexity
or smoothness of the regularisation term. However, these results
are not optimal in cases where such additional conditions hold,
and they also fail to reproduce the classical results for quadratic
regularisation methods.

In order to be able to obtain stronger results, we considered the situation
where either the regularisation term or its convex conjugate is
$p$-convex. In these cases, it is possible to obtain sharper estimates
in the low regularity and high regularity regions, respectively.
Also, these improved results match those classically obtained
for quadratic regularisation, although the approach we have
followed here differs significantly from the classical ones.

Still, quite a few questions remain open.
First, all the results we have discussed here were obtained only for the
case of linear inverse problems.
It seems reasonable, though, to expect that a refinement of the approach chosen in this paper
might lead to convergence rates for non-linear problems as well.
This would be particularly desirable for the case of enhanced convergence rates
in the high regularity region, where up to now no easily interpretable results are available.

Next, it is well known (see~\cite{GraHalSch08,Gra10}) that sparsity assumptions
lead to improved convergence rates of, for instance, order $\delta^{1/p}$
in the case of $\ell^p$-regularisation with $1 < p < 2$.
Therefore, it would make sense to investigate whether sparsity might in general
alter and improve error estimates and convergence rates in the case of H\"older type
fractional source conditions.
For the setting of $\ell^1$-regularisation, it is known that lower order fractional source conditions
$\partial\mathcal{R}(u^\dagger) \cap \range(F^*F)^\nu \neq \emptyset$ for any $0 < \nu \le 1/2$
imply linear convergence rates in the presence of sparsity (see~\cite{FriGra12});
for $\ell^p$-regularisation, the effect of such source conditions is still an open problem.

Finally, all these results apply strictly to Hilbert spaces only, as they
make use of fractional powers of the operator $F$ and of an interpolation inequality.
Since non-quadratic regularisation methods become more important in settings
without a Hilbert space structure, a generalisation of such source conditions
together with corresponding convergence rates to Banach spaces would be desirable.
All of these points will be subject of further investigation in the future.


\begin{thebibliography}{10}

\bibitem{AcaVog94}
R.~Acar and C.~R. Vogel.
\newblock Analysis of bounded variation penalty methods for ill-posed problems.
\newblock {\em Inverse Problems}, 10(6):1217--1229, 1994.

\bibitem{Elb}
R.~Andreev, P.~Elbau, M.~V. de~Hoop, L.~Qiu, and O.~Scherzer.
\newblock Generalized convergence rates results for linear inverse problems in
  {H}ilbert spaces.
\newblock {\em Numer. Funct. Anal. Optim.}, 36(5):549--566, 2015.

\bibitem{BauCom}
H.~H. Bauschke and P.~L. Combettes.
\newblock {\em Convex analysis and monotone operator theory in {H}ilbert
  spaces}.
\newblock CMS Books in Mathematics/Ouvrages de Math\'{e}matiques de la SMC.
  Springer, New York, 2011.
\newblock With a foreword by H\'{e}dy Attouch.

\bibitem{BotHof10}
R.~I. Bo\c{t} and B.~Hofmann.
\newblock An extension of the variational inequality approach for obtaining
  convergence rates in regularization of nonlinear ill-posed problems.
\newblock {\em J. Integral Equations Appl.}, 22(3):369--392, 2010.

\bibitem{Bon08}
T.~Bonesky, K.~S. Kazimierski, P.~Maass, F.~Sch\"{o}pfer, and T.~Schuster.
\newblock Minimization of {T}ikhonov functionals in {B}anach spaces.
\newblock {\em Abstr. Appl. Anal.}, pages Art. ID 192679, 19, 2008.

\bibitem{BurOsh04}
M.~Burger and S.~Osher.
\newblock Convergence rates of convex variational regularization.
\newblock {\em Inverse Problems}, 20(5):1411--1421, 2004.

\bibitem{EHN}
H.~W. Engl, M.~Hanke, and A.~Neubauer.
\newblock {\em Regularization of inverse problems}, volume 375 of {\em
  Mathematics and its Applications}.
\newblock Kluwer Academic Publishers Group, Dordrecht, 1996.

\bibitem{FriGra12}
K.~Frick and M.~Grasmair.
\newblock Regularization of linear ill-posed problems by the augmented
  {L}agrangian method and variational inequalities.
\newblock {\em Inverse Problems}, 28(10):104005, 16, 2012.

\bibitem{Gra10}
M.~Grasmair.
\newblock Generalized {B}regman distances and convergence rates for non-convex
  regularization methods.
\newblock {\em Inverse Problems}, 26(11):115014, 16, 2010.

\bibitem{Gra11}
M.~Grasmair.
\newblock Linear convergence rates for {T}ikhonov regularization with
  positively homogeneous functionals.
\newblock {\em Inverse Problems}, 27(7):075014, 16, 2011.

\bibitem{Gra13}
M.~Grasmair.
\newblock Variational inequalities and higher order convergence rates for
  {T}ikhonov regularisation on {B}anach spaces.
\newblock {\em J. Inverse Ill-Posed Probl.}, 21(3):379--394, 2013.

\bibitem{GraHalSch08}
M.~Grasmair, M.~Haltmeier, and O.~Scherzer.
\newblock Sparse regularization with {$l^q$} penalty term.
\newblock {\em Inverse Problems}, 24(5):055020, 13, 2008.

\bibitem{Gro84}
C.~W. Groetsch.
\newblock {\em The theory of {T}ikhonov regularization for {F}redholm equations
  of the first kind}, volume 105 of {\em Research Notes in Mathematics}.
\newblock Pitman (Advanced Publishing Program), Boston, MA, 1984.

\bibitem{Hei08}
T.~Hein.
\newblock Convergence rates for regularization of ill-posed problems in
  {B}anach spaces by approximate source conditions.
\newblock {\em Inverse Problems}, 24(4):045007, 10, 2008.

\bibitem{Hei09}
T.~Hein.
\newblock Tikhonov regularization in {B}anach spaces---improved convergence
  rates results.
\newblock {\em Inverse Problems}, 25(3):035002, 18, 2009.

\bibitem{Hof06}
B.~Hofmann.
\newblock Approximate source conditions in {T}ikhonov-{P}hillips regularization
  and consequences for inverse problems with multiplication operators.
\newblock {\em Math. Methods Appl. Sci.}, 29(3):351--371, 2006.

\bibitem{Poe}
B.~Hofmann, B.~Kaltenbacher, C.~P\"{o}schl, and O.~Scherzer.
\newblock A convergence rates result for {T}ikhonov regularization in {B}anach
  spaces with non-smooth operators.
\newblock {\em Inverse Problems}, 23(3):987--1010, 2007.

\bibitem{Neu}
A.~Neubauer, T.~Hein, B.~Hofmann, S.~Kindermann, and U.~Tautenhahn.
\newblock Improved and extended results for enhanced convergence rates of
  {T}ikhonov regularization in {B}anach spaces.
\newblock {\em Appl. Anal.}, 89(11):1729--1743, 2010.

\bibitem{VMII}
O.~Scherzer, M.~Grasmair, H.~Grossauer, M.~Haltmeier, and F.~Lenzen.
\newblock {\em Variational methods in imaging}, volume 167 of {\em Applied
  Mathematical Sciences}.
\newblock Springer, New York, 2009.

\bibitem{Ves01}
L.~Vese.
\newblock A study in the {BV} space of a denoising-deblurring variational
  problem.
\newblock {\em Appl. Math. Optim.}, 44(2):131--161, 2001.

\end{thebibliography}
\end{document}